\newtheorem{Thm}{Theorem}
\newtheorem{Prop}[Thm]{Proposition}
\newtheorem{Cor}[Thm]{Corollary}
\newtheorem{Apulause}[Thm]{Lemma}
\theoremstyle{definition}
\newtheorem{Not}[Thm]{Remark}
\renewcommand{\r}{|}
\newcommand{\R}{\mathbb{R}}
\newcommand{\C}{\mathcal{C}}
\renewcommand{\S}{\mathcal{S}}
\newcommand{\G}{\mathcal{G}}
\newcommand{\D}{\mathcal{D}}
\newcommand{\es}{\varnothing} 
\renewcommand{\Cup}{\bigcup}
\newcommand{\ra}{\rightarrow}
\newcommand{\al}{\alpha}
\newcommand{\be}{\beta}
\title{Note on the canonical genus of a knot}
\author{Martina Aaltonen}
\address{Department of Mathematics and Statistics, P.O.Box 68, 00014 Univeristy of Helsinki}
\subjclass[2010]{57M25}
\email{martina.aaltonen@helsinki.fi}
\date{\today}  
\begin{document}
\begin{abstract}We show that every canonical Seifert surface is (up to isotopy) given by a knot diagram in which the (open) Seifert disks are pairwise disjoint.
\end{abstract}
\maketitle
\section{Introduction}

In this article we consider canonical Seifert surfaces and the canonical genus of PL knots \cite{SEI}. 
We prove the following:

\begin{Thm}\label{tulos}Let $M \subset \R^3$ be a Seifert surface that is canonical with respect to a knot diagram $\D.$ Then there exists a knot diagram $\D'$ in which the (open) Seifert disks are pairwise disjoint and a Seifert surface that is canonical with respect to $D'$ and isotopic to $M.$
\end{Thm}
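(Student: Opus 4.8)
It suffices to isotope $M$ into the following normal form: a disk-band surface whose disks lie in a common horizontal plane (so are pairwise unnested) and whose bands are mutually non-crossing half-twisted rectangles, each meeting the plane only along its two attaching arcs and projecting to a single crossing. Any surface in this normal form is canonical with respect to the diagram $\D'$ obtained by projecting its boundary: one checks that the oriented smoothing at each band undoes its half-twist, so that the Seifert circles of $\D'$ are exactly the (unnested) disk boundaries and the bands produced by Seifert's algorithm are exactly the given bands. Since $M$ is itself produced by Seifert's algorithm on $\D$, it is already a union of disks $D_i$ bounded by the Seifert circles (stacked according to their nesting) together with half-twisted bands $B_j$, one per crossing; the whole task is therefore to remove the nesting of the disks by an isotopy that keeps the band system clean.

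First I would record the combinatorics governing the nesting. The Seifert circles are disjoint planar curves, so containment is a partial order whose Hasse diagram is a forest; let $\nu(\D)$ denote the number of nested pairs. If $\nu(\D)=0$ we are done, so suppose $\nu(\D)>0$ and descend in the forest to a nested circle $c$ whose disk is innermost (it contains no other Seifert circle), sitting immediately inside a circle $t$. Because a band incident to a circle strictly inside $t$ joins it only to circles that are again inside or equal to $t$, the part of $M$ lying over the disk of $t$ is attached to the remainder of $M$ solely through bands incident to $t$ itself; this will localize the move.

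The heart of the proof is an un-nesting move. When $c$ is joined by its bands only to $t$, the annulus between $c$ and $t$ contains an unoccupied sub-rectangle between two consecutive bands; I would cut the annulus open there and unroll the disk of $c$, carrying its incident bands along, depositing $c$ beside $t$. This is an ambient isotopy that turns the radial bands into side-by-side bands, creates no new crossing, keeps the surface a clean disk-band surface, and strictly lowers $\nu$. Iterating and inducting on $\nu$ then yields a diagram $\D'$ with pairwise disjoint open Seifert disks whose canonical surface is isotopic to $M$, the genus being preserved automatically since the move is an isotopy.

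The main obstacle is the obstructed case, in which the annulus inside $t$ is blocked: $c$ may have siblings inside $t$, or a band of $c$ may run to such a sibling rather than to $t$, so that unrolling $c$ across the boundary of $t$ would drag a band over $t$ and create crossings. The real work is to show that an admissible move is nonetheless always available. Either one selects, whenever $\nu>0$, a circle that can be unrolled past exactly one circle with no band tethering it to the interior (for instance by a careful descent in the nesting forest, possibly after re-choosing the unbounded region to reverse a concentric subfamily); or one permits the move to introduce crossings and verifies that these merge Seifert circles in exactly the way that keeps Seifert's algorithm on $\D'$ returning the isotoped $M$. Recalling that $2g=c-n+1$ for a connected diagram of a knot, any such move must keep the number of crossings minus the number of Seifert circles fixed, and this bookkeeping, together with the requirement that the band system stay non-crossing and correctly oriented and that a suitable complexity strictly decrease to guarantee termination, is the crux of the argument.
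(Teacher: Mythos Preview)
Your proposal identifies the right target and a plausible complexity (the nesting count $\nu$), but it stops exactly at the hard point. The ``obstructed case'' is not an afterthought: configurations in which every innermost circle is tethered by bands to siblings, or in which unrolling any innermost circle past its parent drags a band across other Seifert circles, are generic. You offer two ways out --- find a circle that can always be cleanly unrolled, or allow new crossings and do the bookkeeping --- but you carry out neither. In particular, the first option is not obviously true (there is no guarantee that some innermost circle has all its bands going to the parent with a free gap), and the second option is precisely the content of the theorem. As written, the proposal is a sketch that names the crux rather than resolving it.

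For comparison, the paper's argument avoids this obstruction analysis entirely by making the opposite choice at every step: instead of moving an innermost circle outward, it acts on an \emph{outer} circle $s\in\S_{II}(\D)$ (one whose disk properly contains another). The move replaces $u_s$ by a thin collar of its boundary and then flips that collar to the outside of $\mathrm{Im}(s)$, so that the new strip projects to a region disjoint from the old diagram except along $\mathrm{Im}(s)$ itself. This deliberately creates $2\#\C_{II}(s)$ new crossings and the same number of new Seifert circles, so the genus is unchanged; crucially, the new Seifert circles bound disks meeting $\mathrm{Im}(d')$ only in their boundaries, hence none lies in $\S_{II}(\D')$, while no old circle gains a nested child. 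Thus $\#\S_{II}$ drops by exactly one, with no case distinction needed. If you want to salvage your approach, the cleanest fix is to adopt exactly this ``flip the outer disk to the exterior'' move in place of your unrolling of the inner disk; then the obstructed case simply does not arise.
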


Recently, it was pointed out to the author that this Theorem
is already known, see \cite{HIR}. Theorem \ref{tulos} imposes the classical result that the fundamental group of $\R^3 \setminus M$ is a free group for any canonical Seifert surface $M$ of a knot. Theorem \ref{tulos} also has the following corollary. 

\begin{Cor}\label{pu} The canonical genus of a knot is achieved in a subclass of knot diagrams for which the (open) Seifert disks are pairwise disjoint.
\end{Cor}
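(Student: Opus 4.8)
The plan is to deduce the Corollary directly from Theorem~\ref{tulos}, the only work being to repackage a statement about a single surface into one about the minimum defining the canonical genus. Recall that the canonical genus $g_c(K)$ of a knot $K$ is the minimum of $g(M)$ taken over all canonical Seifert surfaces $M$ of $K$, equivalently the minimum over all knot diagrams $\D$ of $K$ of the genus of the surface produced from $\D$ by Seifert's algorithm. First I would observe that this minimum is attained: the genus is a nonnegative integer, and the set of canonical genera realized by diagrams of $K$ is a nonempty subset of $\N$, hence has a least element. So I fix a diagram $\D$ of $K$ whose canonical Seifert surface $M$ satisfies $g(M)=g_c(K)$.

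Next I would apply Theorem~\ref{tulos} to this $M$. It furnishes a diagram $\D'$ in which the open Seifert disks are pairwise disjoint, together with a canonical Seifert surface $M'$ with respect to $\D'$ that is isotopic to $M$. Since an ambient isotopy of $\R^3$ is a homeomorphism carrying $M$ onto $M'$, it preserves the genus, so $g(M')=g(M)=g_c(K)$; it also carries $\partial M$ onto $\partial M'$, so $M'$ is a Seifert surface of a knot of the same type as $K$, and $\D'$ is a diagram of $K$. Thus $\D'$ lies in the subclass of diagrams with pairwise disjoint open Seifert disks, and its canonical surface has genus exactly $g_c(K)$.

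Finally I would conclude by the two obvious inequalities. The minimum of the canonical genus over the restricted subclass is at least $g_c(K)$, being a minimum over a subfamily of the diagrams defining $g_c(K)$; and the surface $M'$ exhibits a diagram in this subclass attaining $g_c(K)$, giving the reverse inequality. Hence the canonical genus is achieved within the subclass, as claimed. The only point requiring care—and the nearest thing to an obstacle—is the verification that the diagram $\D'$ produced by the theorem still represents the original knot type; this is exactly what the isotopy of $M$ onto $M'$ guarantees, since it takes $\partial M = K$ to $\partial M'$. Everything else is a formal consequence of the definition of canonical genus as a minimum over a nonempty set of nonnegative integers.
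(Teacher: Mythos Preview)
Your argument is correct and is exactly the intended deduction: the paper does not spell out a proof of this corollary, presenting it as an immediate consequence of Theorem~\ref{tulos}, and your write-up simply makes explicit the obvious step of choosing a genus-minimizing diagram, applying the theorem, and using that isotopy preserves both the knot type and the genus.
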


\begin{figure}[h!]
\includegraphics{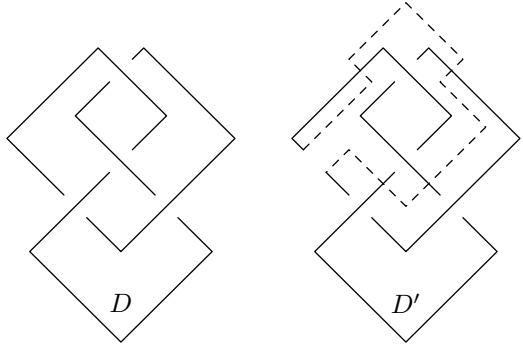}\caption{In the knot diagram $\D'$ the evenly dotted PL curve lies on top of the black PL curve coming from the knot diagram $\D.$ The diagrams $D$ and $D'$ are both knot diagrams for the figure eight knot satisfying $\G(\D)=\G(\D')=1.$ The collection of (open) Seifert disks in $\D$ is not pairwise disjoint, but the collection of (open) Seifert disks in $\D'$ is pairwise disjoint.}
\label{werry}
\end{figure}

In \cite[Ch. VII]{K} Kauffman has shown how to construct from a knot diagram a link diagram in which the open Seifert disks are pairwise disjoint by adding simple closed polygons on top of the knot diagram. Based on this observation we give an algorithm to construct from any knot diagram such a knot diagram in which the open Seifert disks are pairwise disjoint. We show that this modification corresponds to an isotopy between two Seifert surfaces that are canonical with the respective knot diagrams.\\ 

\textbf{Acknowledgements.}
I want to thank Sebastian Baader for his helpfulness and his excellent suggestions, Pekka Pankka for reading the manuscript and his suggestions for the presentation and Vadim Kulikov for discussions on the topic.   

\section{Notation and preliminaries}

We recall that PL knots are isotopy classes of polyhedral embeddings $k \colon S^1 \ra \R^3.$ For the convenience of presentation we denote $\R^2=\R^2 \times \{0\}$ throughout this article. Let $d \colon S^1 \ra \R^2$ be a polyhedral curve so that 
\begin{itemize} 
\item[(a)] $\mathrm{Im}(d)\subset \R^2$ is a polygon with vertices $v_1, \ldots, v_k \in \R^2$ satisfying $\#(d^{-1}\{v_j\})=1$ for every $j \in \{0, \ldots, k\},$
\item[(b)] $\#(d^{-1}\{z\}) \leq 2$ for all $z \in \R^2$ and
\item[(c)] the set $\C(d):=\{z \in \R^2  \mid \#(d^{-1}\{z\})=2\}$ is finite.
\end{itemize}
Let $f \colon d^{-1}(\C(d)) \ra \{0,1\}$ be a function satisfying $f(d^{-1}\{z\})=\{0,1\}$ for all $z \in \C(d).$ We call the pair $\D:=(d,f)$ a \textit{knot diagram} and $\C(\D):=\C(d)$ the \textit{crossings} of $\D.$ In what follows we fix an orientation of $S^1$ and note by $[x_1,x_2] \subset S^1$ the segment from $x_1$ to $x_2$ in this orientation. 

Let $k \colon S^1 \ra \R^3$ be a polyhedral embedding, $p \colon \R^3 \ra \R^2$ the projection and $\D=(d,f)$ a knot diagram. We say that $p$ \textit{induces} $\D$ \textit{from} $k$ if $d=p \circ k$ and for $x_1 \in f^{-1}\{1\}$ and $x_2 \in f^{-1}\{0\}$ satisfying $d(x_1)=d(x_2)$ the third coordinate of $k(x_1)$ is bigger than the third coordinate of $k(x_2).$  

Let $\D:=(d,f)$ be a knot diagram. A restriction map 
$s:=d \r E$ is a \textit{Seifert circuit} if $E=\Cup_{i=0}^k [x_i,y_i]$ for such points $x_0, \ldots, x_k$ and $y_0, \ldots, y_k$ in $d^{-1}(\C(\D))$ that
\begin{itemize}
\item[(a)] $[x_i,y_i] \cap d^{-1}(\C(\D)) = \{x_i,y_i\}$ for $i \in \{0, \ldots, k\}$ and  
\item[(b)] $d(x_i)=d(y_{i-1})$ and $f(x_i)\neq f(y_{i-1})$ for all $i \in \{1, \ldots, k\}$ and  $d(x_0)=d(y_k)$ and $f(x_0)\neq f(y_k).$  
\end{itemize}
We denote by $\S(\D)$ the collection of Seiferts circuits in $\D$ and for every $s \in \S(\D)$ we denote $\C(s):=\C(\D) \cap \mathrm{Im}(s).$ 

Let $\D=(d,f)$ be a knot diagram. For a Seifert circuit $s \in \S(\D)$ the image $\mathrm{Im}(s) \subset \R^2$ is a simple closed polygon. We call an open bounded connected subset $b_s \subset \R^2$ satisfying $\partial b_s=\mathrm{Im}(s)$ the (open) \textit{Seifert disk} bounded by $s.$ 
Now for every point $z \in \mathrm{Im}(d)$ there exists $s \in \S(\D)$ so that $z \in \mathrm{Im}(s)$ and for every $z \in \C(\D)$ there exists a unique pair $s,t \in \S(\D)$ so that $z \in \mathrm{Im}(s) \cap \mathrm{Im}(t).$

We denote
$$\S_{II}(\D):=\{s \in \S(\D) \mid b_{t} \subsetneqq b_s \text{ for some }t \in \S(\D)\}$$
and for every $s \in \S(\D)$ we denote
$$\C_{II}(s):=\{z \in \C(s) \mid  z \in \C(t) \text{ for } t \in \S(\D) \text{ satisfying } b_{t} \subsetneqq b_s \}.$$

\begin{Apulause}\label{1} Let $\D$ be a knot diagram. Then the open Seifert disks are pairwise disjoint if and only if $\S_{II}(\D)=\es.$
\end{Apulause}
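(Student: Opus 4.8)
The plan is to prove both implications, one of which is essentially immediate while the other requires a planar--topology argument. If $\S_{II}(\D) \neq \es$, then by definition there are $s,t \in \S(\D)$ with $b_t \subsetneqq b_s$; since a Seifert disk is a nonempty open set, $b_s \cap b_t = b_t \neq \es$, so the open Seifert disks are not pairwise disjoint. This already yields the implication ``pairwise disjoint $\Rightarrow \S_{II}(\D)=\es$'' by contraposition. For the converse I would also argue by contraposition: assuming $b_s \cap b_t \neq \es$ for two distinct $s,t \in \S(\D)$, I would show that one of $b_s,b_t$ is strictly contained in the other, placing $s$ or $t$ in $\S_{II}(\D)$.

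The heart of the matter, and what I expect to be the main obstacle, is the following geometric observation. Writing $C_1=\mathrm{Im}(s)$ and $C_2=\mathrm{Im}(t)$, the two simple closed polygons meet only in a finite set $F$ of common crossings --- indeed a point of $\mathrm{Im}(d)$ that is not a crossing has a single $d$-preimage and hence lies on a single Seifert circuit --- and \emph{at each point of $F$ they touch without crossing}. To see this I would analyze a common crossing $z$ directly from the defining condition of a Seifert circuit: the two local arcs of $s$ through $z$ are joined by the rule $d(x_i)=d(y_{i-1})$, $f(x_i)\neq f(y_{i-1})$, which is exactly the oriented (Seifert) smoothing of the crossing, while the arcs of $t$ through $z$ use the complementary pair of germs. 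Reading the four arc-germs at $z$ in cyclic order, the two germs belonging to $s$ are adjacent and the two belonging to $t$ are adjacent, so the cyclic pattern is $sstt$ rather than the alternating $stst$; hence $C_1$ and $C_2$ meet at $z$ with their corners in opposite sectors and do not cross there.

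Granting the non-crossing property, the remainder is Jordan--curve bookkeeping. Each component arc of $C_2 \setminus F$ is connected and disjoint from $C_1$, so it lies in a single component of $\R^2 \setminus C_1$; and because at every touch vertex $z \in F$ the curve $C_2$ stays in one sector of the corner of $C_1$ (it does not switch sides), consecutive arcs lie on the same side, so by connectedness of $C_2$ all of $C_2 \setminus F$ lies in one component of $\R^2 \setminus C_1$. Thus either $C_2 \subset \ol{b_s}$ or $C_2$ lies in the unbounded component $U$ of $\R^2 \setminus C_1$. In the first case the fact that $U$ is connected, unbounded, and disjoint from $C_2$ forces $U$ to miss $b_t$ as well, giving $b_t \subset \ol{b_s}$ and then $b_t \subset b_s$. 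In the second case $b_s$ is disjoint from $C_2$, hence lies in one component of $\R^2 \setminus C_2$, and the standing hypothesis $b_s \cap b_t \neq \es$ forces $b_s \subset b_t$.

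Finally I would upgrade containment to strict containment: distinct Seifert circuits have distinct images (again because a non-crossing point has a single preimage and so lies on only one circuit), so $C_1 \neq C_2$ and therefore $b_s \neq b_t$. Hence $b_t \subsetneqq b_s$ or $b_s \subsetneqq b_t$, placing $s$ or $t$ in $\S_{II}(\D)$ and completing the contrapositive. The two delicate points are the ``same side'' claim at a touch vertex --- which is exactly where the non-crossing geometry of the preceding paragraph is used --- and the passage from $C_2 \subset \ol{b_s}$ to $b_t \subset b_s$; both are elementary once the local configuration of the two polygons at each common crossing has been pinned down.
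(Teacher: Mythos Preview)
Your proof is correct and follows essentially the same route as the paper: both directions are argued by contraposition, and the nontrivial one hinges on the observation that the images of two distinct Seifert circuits never transversally cross (your $sstt$ analysis is exactly what the paper summarizes as ``by property (a) of a knot diagram and property (b) of a Seifert circuit''), after which Jordan--curve reasoning gives containment of one disk in the other. Your write-up is more careful than the paper's in that you explicitly treat the symmetric case $b_s \subset b_t$ and the passage to strict containment, but the underlying argument is the same.
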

\begin{proof} Suppose that $\S_{II}(\D)= \es,$ but that there exists Seifert circuits $s, t \in \S(\D), s\neq t,$ so that $b_s \cap b_t \neq \es.$ Then there exists a point in $\mathrm{Im}(t) \cap b_s.$ By property (a) of a knot diagram and property (b) of a Seifert circuit the simple closed polygons $\mathrm{Im}(t)$ and $\mathrm{Im}(s)$ do not cross each other. Thus $\mathrm{Im}(t) \subset \mathrm{Im}(s) \cup b_s.$ In particular, $b_t \subset b_s$ and $b_s \cap b_t =b_t.$ Thus $s \in \S_{II}(\D),$ which is a contradiction. Thus the open Seifert disks are pairwise disjoint. Suppose $\S_{II}(\D)\neq \es$ and $s \in \S_{II}(\D).$ Then there exists trivially a pair of Seifert disks with non-empty intersection. 
\end{proof}

For the definition of a canonical Seifert surface, we give first the definition of a crossing area. Let $\D=(d,f)$ be a knot diagram and $z \in \C(\D).$ Let $z_1, z_2 \in \mathrm{Im}(s)$ be points so that $[z,z_{1}], [z,z_{2}] \subset \mathrm{Im}(s)$ and 
$[z_{1},z_{2}] \cap \mathrm{Im}(d)=\{z_{1}, z_{2}\}$ and let $\Delta^z_{i} \subset \R^2$ be the closed set that has the polygon $[z,z_{1}] \cup [z,z_{2}] \cup [z_{1},z_{2}]$ as its boundary for every $s \in \S(\D)$ satisfying $z \in \C(s).$ Then we call $w_z:=\Cup_{i \in \{s,t\}} \Delta^z_{i},$ where $s,t \in \S(\D)$ are distinct Seiferts circles satisfying $z \in \C(s) \cap C(t),$ a \textit{crossing area} of $z.$ 

\begin{Not} Let $\D$ be a knot diagram $s \in \S(\D)$ and $(w_i)_{i \in \C(s)}$ be a collection of pairwise disjoint crossing areas. Let $b_s \subset \R^2$ be the open Seifert disk bounded by $s.$ Then the boundary of $b_s \setminus \big(\Cup_{i \in \C(s)} \Delta^i_s \big) \subset \R^2$ is a simple closed polygon. Further,
\vspace{0,15cm} 
\begin{center}
$b_s \setminus \big(\Cup_{i \in \C(s)} \Delta^i_s \big)=b_s \setminus \big(\Cup_{i \in \C(s) \setminus \C_{II}(s)} \Delta^i_s \big)=b_s \setminus \big(\Cup_{i \in \C(s)\setminus \C_{II}(s)} w_i \big),$
\end{center}
\vspace{0,15cm} 
since $b_s \cap \Delta^i_s=\es$ for $i \in \C_{II}(s)$ and $b_s \cap w_i=b_s \cap \Delta^i_s$ for $i \in \C(s) \setminus \C_{II}(s).$ 
\end{Not} 


Let $M \subset \R^3$ be a Seifert surface and $\D=(d,f)$ a knot diagram. Then $M$ is \textit{canonical with respect} to $\D,$ if $\D$ is induced from $\mathrm{Bd}(M)$ by the projection $p \colon \R^3 \ra \R^2$ and there exists a triple $((u_i)_{i \in \S(\D) \cup \C(\D)},(w_i)_{i \in \C(\D)},(r_i)_{i \in \S(\D)}),$ where $(u_i)_{i \in \S(\D) \cup \C(\D)}$ is a collection of PL $2$-manifolds in $\R^3$ with one boundary component and genus $0,$ $(w_i)_{i \in \C(\D)}$ is a collection of pairwise disjoint crossing areas of $\D$ and $(r_i)_{i \in \S(\D)}$ is a collection of real numbers, so that 
\begin{itemize}
\item[(a)]$M=\Cup_{i \in \S(\D) \cup \C(\D)} u_i,$ 
\item[(b)]if $i,j \in  \S(\D) \cup \C(\D)$ satisfy $i \in \S(\D)$ and $j \in \C(i)$ or $j \in \S(D)$ and $i \in \C(j),$ then $u_i \cap u_j= \mathrm{Bd}(u_i) \cap \mathrm{Bd}(u_j) \approx [0,1],$ otherwise $u_i \cap u_j=\es,$ 
\item[(c)]$u_i=\big\{z+(0,0,r_i) \mid z \in \mathrm{Cl}\big(b_i \setminus \Cup_{j \in \C(i)\setminus \C_{II}(i)}w_j\big)\big\}$ for $i \in \S(\D)$ and $b_i \subset \R^2$ the open Seifert disk bounded by $i,$ 
\item[(d)]$u_i \subset p^{-1}(w_i)$ for $i \in \C(\D)$ and 
\item[(e)]$u_j \cap (\R^2 +(0,0,r_i))=u_j \cap u_i$ for $i \in \S(\D)$ and $j \in \C_{II}(i).$
\end{itemize}
We say that the triple $((u_i)_{i \in \S(\D) \cup \C(\D)},(w_i)_{i \in \C(\D)},(r_i)_{i \in \S(\D)})$ is the data of a canonical Seifert surface $M$ with respect to $\D.$

We say that a Seifert surface $M$ of a knot is a \textit{canonical Seifert surface}, if $M$ is isotopic to a Seifert surface that is canonical with respect to a knot diagram $\D$ of that knot. Note that, if a Seifert surface $M$ is canonical with respect to $\D,$ then the genus $\G(\D)$ of $M$ is $(\#\C(\D)-\#\S(\D)+1)/2.$ 

We recall that using Seiferts algorithm \cite{ROL} one can construct for every knot diagram $\D$ a Seifert surface $M \subset \R^3$ that is canonical with respect to $\D.$ Indeed, let $(w_i)_{i \in \C(\D)}$ be a collection of pairwise disjoint crossing areas of $\D$ and $(r_i)_{i \in \S(\D)}$ a collection of real numbers so that by defining 
\vspace{0,15cm}
\begin{center}
 $u_i:=\big\{z + (0,0,r_i) \mid z \in \mathrm{Cl}\big(b_i \setminus \Cup_{j \in \C(i)\setminus \C_{II}(i)} w_j\big)\big\}$
\end{center}
\vspace{0,15cm} 
for every $i \in \S(\D)$ the collection $(u_i)_{i \in \S(\D)}$ is pairwise disjoint and 
$$\{z+(0,0,r) \mid z \in w_k, r \in [r_{i},r_{j}]\} \cap \{u_l \mid l \in \S(\D)\} \subset u_i \cap u_j$$ for every $i,j \in \S(\D)$ and $k \in \C(i) \cap \C(j).$ Then there exists a collection $(u_i)_{i \in \C(\D)}$ of bands corresponding to crossings, see Figures \ref{krrr} and \ref{wer}, so that 
$M:=\Cup_{i \in \S(\D) \cup \C(\D)} u_i$ is a Seifert surface and the data of $M$ with respect to $\D$ is the triple $((u_i)_{i \in \S(\D) \cup \C(\D)},(w_i)_{i \in \C(\D)},(r_i)_{i \in \S(\D)}).$

\begin{figure}[htb]
\includegraphics{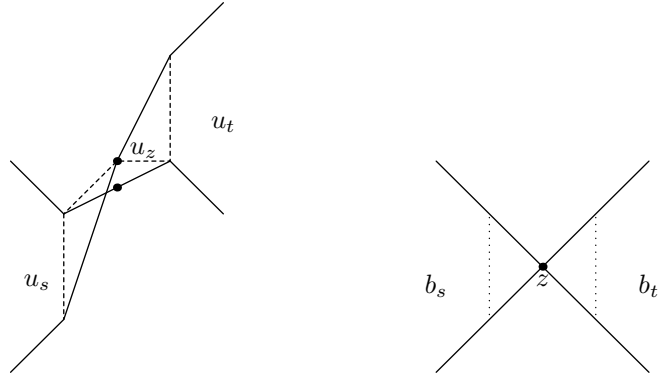}
\caption{For $\D=(d,f),$ $s,t \in \S(\D)$ and $z \in (\C(s)\setminus \C_{II}(s)) \cap \C(t)$ the band $u_z$ connecting $u_{s}$ and $u_{t}$ on the left and $\mathrm{Im}(d) \subset \R^2$ around the crossing $z$ on the right.}\label{krrr}
\end{figure} 
 
\begin{figure}[htb]
\includegraphics{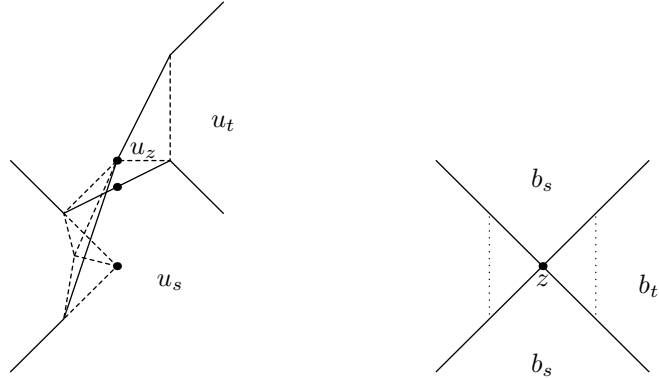}
\caption{For $\D=(d,f),$ $s,t \in \S(\D)$ and $z \in \C_{II}(s) \cap \C(t)$ the band $u_z$ connecting $u_{s}$ and $u_{t}$ on the left and $\mathrm{Im}(d) \subset \R^2$ around the crossing $z$ on the right.}\label{wer}
\end{figure}

\section{Proof of Theorem \ref{tulos}}


\begin{figure}[htb]
\includegraphics{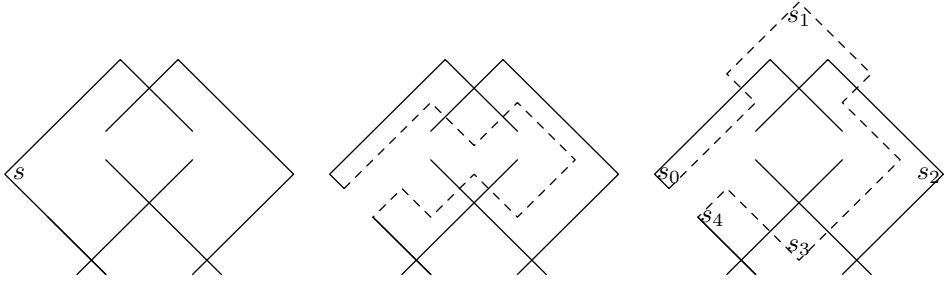}
\caption{On the left an illustration of $s \in \S_{II}(\D).$ In the middle is illustrated the changes in the induced knot diagram obtained by the first isotopy that takes $\be$ to $\be_1.$ On the right is illustrated the changes in the induced knot diagram by the second isotopy that takes $\be_1$ to such $\be_2$ that $p(\be_2) \cap \mathrm{Im}(d) \subset \mathrm{Im}(s)$ and the Seifert circuits $s_0, \ldots, s_4 \in \S(D_2)\setminus \S_{II}(D_2).$}
\label{werryt}
\end{figure}

\begin{Prop}\label{3} Let $\D=(d,f)$ be a knot diagram and $M \subset \R^3$ a Seifert surface that is canonical with respect to $\D.$ Suppose $\S_{II}(\D) \neq \es$ and $s \in \S_{II}(\D).$ Then there exists a knot diagram $\D'=(d',f')$ and a Seifert surface $M'$ so that 
\begin{itemize}
\item[(i)]$M'$ is canonical with respect to $\D',$ 
\item[(ii)]$M'$ is isotopic to $M,$ 
\item[(iii)]$\S_{II}(\D')=\S_{II}(\D) \setminus \{s\}$ and 
\item[(iv)]$\#\C(\D')-\#\C(\D)=2\#\C_{II}(s).$ 
\end{itemize}
\end{Prop}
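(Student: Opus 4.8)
The plan is to realise the two isotopies depicted in Figure~\ref{werryt} as ambient isotopies of $\R^3$ supported in a neighbourhood of the surface piece $u_s$ of $M$ together with the bands $u_z$ for $z \in \C_{II}(s)$, and then to read off the new diagram $\D'$ and new canonical data from the result. Write $n := \#\C_{II}(s)$ and let $t$ range over the Seifert circuits with $b_t \subsetneqq b_s$ that meet $s$ at the crossings of $\C_{II}(s)$. By condition~(c) the disk $u_s$ lies at height $r_s$ and covers all of $b_s$ except the notches $w_j$ with $j \in \C(s)\setminus\C_{II}(s)$; in particular it lies directly over the inner disks $b_t$, and for each $z \in \C_{II}(s)$ a band $u_z$ runs from the boundary of $u_s$ to the inner piece, as in Figure~\ref{wer}. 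Let $\be \subset \mathrm{Bd}(M)$ be the sub-arc of the knot bounding the part of $u_s$ that lies over the inner structure, i.e.\ the arc traversing the crossings of $\C_{II}(s)$ as the over-strand.

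First I would push $\be$, together with the sheet of $u_s$ it bounds, vertically upward to an arc $\be_1$ lying above the rest of $M$; since this is carried out into the empty region above the finite surface it is an ambient isotopy of $\R^3$, and it disengages the covering sheet from the inner disks, realising in the induced diagram the transition from the nested crossings of Figure~\ref{wer} toward the unnested ones of Figure~\ref{krrr}. Secondly I would slide $\be_1$ horizontally to an arc $\be_2$ whose projection hugs $\mathrm{Im}(s)$, so that $p(\be_2) \cap \mathrm{Im}(d) \subset \mathrm{Im}(s)$; as the finger is dragged out along $\mathrm{Im}(s)$ it must pass over the two strands abutting each crossing $z \in \C_{II}(s)$, creating exactly two new crossings per such $z$ and none elsewhere. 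This gives $\#\C(\D')-\#\C(\D)=2\#\C_{II}(s)$, which is~(iv), and it settles~(ii), since the whole modification is by construction an ambient isotopy.

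It remains to record $\D' := (d',f')$, to equip the isotoped surface $M'$ with canonical data, and to verify~(i) and~(iii). For~(i) I would keep the crossing areas $w_i$ for $i \in \C(\D)$, choose pairwise disjoint crossing areas at the $2n$ new crossings, and adjust the heights $r_i$ so that, the inner disks no longer being covered by $s$, each $u_z$ with $z \in \C_{II}(s)$ now appears as an ordinary connecting band of the type in Figure~\ref{krrr}; one then checks that the reassembled pieces satisfy (a)--(e) with respect to $\D'$. The central point for~(iii) is the circuit count: the rerouting of $\be$ along $\mathrm{Im}(s)$ cuts the single circuit $s$ into the family $s_0,\dots,s_{2n}$ indicated in Figure~\ref{werryt} (where $n=2$), a count forced by $\G(\D')=\G(\D)$ once~(iv) holds and the remaining circuits are unchanged, since then $\#\S(\D')=\#\S(\D)+2n$. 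One verifies that none of $s_0,\dots,s_{2n}$ bounds a disk properly containing another Seifert disk of $\D'$, so each lies in $\S(\D')\setminus\S_{II}(\D')$, while every circuit of $\D$ other than $s$ survives unchanged with its disk and nesting relations intact, the isotopy being supported near $u_s$ and the crossings of $\C_{II}(s)$. Together these give $\S_{II}(\D')=\S_{II}(\D)\setminus\{s\}$.

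The main obstacle I anticipate is precisely the bookkeeping of this last paragraph: one must check that dragging $\be$ out to $\mathrm{Im}(s)$ introduces \emph{no} new nesting, so that the new circuits $s_0,\dots,s_{2n}$ and the previously inner circuits all acquire pairwise non-nested, or unchanged, disks, and that the two crossings created at each $z \in \C_{II}(s)$ can be assigned crossing areas disjoint from one another and from the unchanged ones. Making the hugging arc $\be_2$ explicit as a polygon satisfying conditions~(a)--(c) of a knot diagram, and confirming that the heights $r_i$ can be chosen consistently so that (c) and (e) hold for the reassembled $M'$, is where the genuine work lies; the isotopy invariance~(ii) and the crossing count~(iv) are comparatively direct once the arc $\be$ and its image $\be_2$ are pinned down.
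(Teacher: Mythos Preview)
Your strategy coincides with the paper's: perform a two-stage isotopy supported near $u_s$ and the bands $u_z$, $z\in\C_{II}(s)$, then read off $\D'$ and verify the canonical data. The crossing count~(iv), the circuit count $\#\S(\D')=\#\S(\D)+2n$, and your closing remarks on the bookkeeping all match the paper's treatment.

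The description of the two isotopies, however, diverges from the paper's and contains a gap. Your first move is a \emph{vertical} lift of $\be$ together with ``the sheet of $u_s$ it bounds''. A vertical isotopy leaves the projected diagram unchanged, so it cannot by itself realise the transition you claim from Figure~\ref{wer} toward Figure~\ref{krrr}. Moreover, $\be$ as you define it (the over-arc through the crossings of $\C_{II}(s)$) lies in $\mathrm{Bd}(u_s)$ and, together with the band-attachment arcs, bounds \emph{all} of $u_s$; it is not clear which proper subsheet you intend to lift. Your second move, a horizontal slide so that $p(\be_2)$ ``hugs'' $\mathrm{Im}(s)$, is then underdetermined: $p(\be)$ already lies in $\mathrm{Im}(s)$, and without first shrinking $u_s$ you still have a full disk above the inner structure rather than a thin strip that can be tucked to one side.

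The paper makes a choice that resolves this. It takes $\be$ to be a \emph{short} arc of $\mathrm{Bd}(u_s)$ disjoint from all crossing areas, with $\al$ the long complementary arc. The first isotopy is horizontal at height $r_s$: it shrinks $u_s$ to a thin collar $F$ of $\al$ inside $u_s$, replacing $\be$ by a long arc $\be_1$ parallel to $\al$. This shrinking is the essential step that removes the sheet covering the inner disks and already changes the induced diagram. The second isotopy contracts $F$ further toward $\al$ and rotates it \emph{locally} around $\al$ in a neighbourhood of each $z\in\C_{II}(s)$, up or down to avoid the band $u_z$, before stretching back out at height $r_s$. The resulting $G$ has $p(\be_2)$ crossing $p(\al)$ exactly twice near each such $z$, giving the $2n$ new crossings, and the circuits $s_0,\dots,s_{2n}$ arise as boundaries of the components of $p(G)\setminus p(\al\cup\be_2)$, visibly free of nested Seifert disks since $p(G)\cap\mathrm{Im}(d)=p(\al)$. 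No heights need adjusting: one sets $r_{s_j}=r_s$ for the new circuits and keeps all remaining data, contrary to what you propose.
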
 
\begin{proof} Let $((u_i)_{i \in \S(\D) \cup \C(\D)},(w_i)_{i \in \C(\D)},(r_i)_{i \in \S(\D)})$ be the data of $M$ with respect to $\D.$ Let $s \in \S_{II}(\D).$ Then $\mathrm{Bd}(u_s) \subset (\R^3 +(0,0,r_s))$ is a simple closed polygon and there exists $\be=[z_2,z_1]\subset \mathrm{Bd}(M)$ so that $\be \subset \mathrm{Bd}(u_s) \setminus p^{-1}\big(\Cup_{i \in \C(\D)}w_i\big).$
Now $\al=(\mathrm{Bd}(u_s) \setminus [z_2,z_1]) \cup \{z_1,z_2\}$ is a simple PL curve and there exists an embedding $\iota \colon \al \ra u_s \setminus \mathrm{Bd}(u_s)$ so that the PL curve $\be_1=[z_1, \iota(z_1)] \cup \iota(\al) \cup [\iota(z_2),z_2]$ is such a simple curve in $(\R^2 +(0,0,r_s))$ that $F=\Cup_{z \in \al}[z,\iota(z)]$ is a PL disk having $\al \cup \be_1$ as its boundary. 
In particular, there exists an isotopy that takes $M$ to $M_1:=(M \setminus u_s) \cup F$ and keeps   
$M \setminus u_s$ fixed. Now the projection $p \colon \R^3 \ra \R^2$ induces a knot diagram $D_1:=(d_1,f_1)$ of $\mathrm{Bd}(M_1)$ so that 
$$\mathrm{Im}(d_1)=\big(\mathrm{Im}(d)\setminus p(\be)\big) \cup p(\be_1),$$
see Figures \ref{werryt} and \ref{kr}.  
\begin{figure}[ht]
\includegraphics{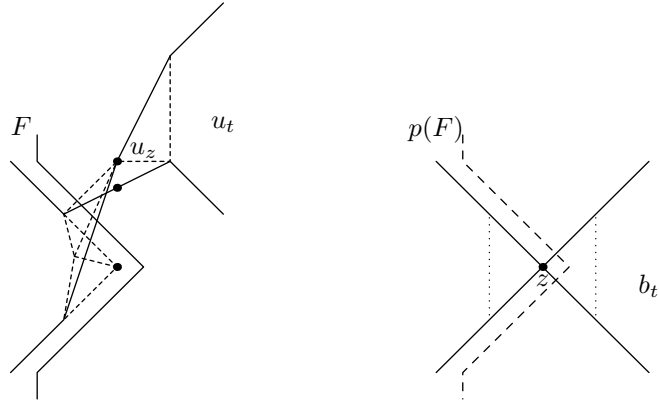}
\caption{For $z \in \C_{II}(s) \cap \C(t),$ $t \in \S(\D),$  
the band $u_z$ connecting $F$ and $u_{t}$ on the left and $\mathrm{Im}(d_1) \subset \R^2$ around the crossing $z$ on the right.}
\label{kr}
\end{figure} 

We note that $M_1$ is not in general canonical with respect to $D_1.$ 
However, by contracting $F$ linearly along the segments $[z,\iota(z)]$ for each $z \in \al$ towards $\al$ and rotating it for every $i \in \C_{II}(s)$ locally around $\al$ in a neighbourhood of $p^{-1}(w_i) \cap \al$ either upwards or downwards to avoid $u_i$ and then stretching it away from $\al$ in $(\R^2 +(0,0,r_s)),$ we have an isotopy that fixes $M_1 \setminus F$ and takes $F$ to a PL 2-manifold $G \subset \R^3$ and $\be_1$ to $\be_2$ so that $M_2:=(M_1 \setminus F) \cup G$ satisfies:
\begin{itemize}
\item[(a)]$\partial_{\R^2}(p(G))=p(\al) \cup p(\be_2)$ and $p(G)\cap \mathrm{Im}(d)=p(\al),$ 
\item[(b)] the projection $p \colon \R^3 \ra \R^2$ induces a knot diagram $\D_2:=(d_2,f_2)$ of $\mathrm{Bd}(M_2)$ so that 
$$\mathrm{Im}(d_2)=\big(\mathrm{Im}(d)\setminus p(\be)\big) \cup p(\be_2),$$ $$\C(\D_2)\setminus \C(\D)=p(\al \setminus \{z_1,z_2\}) \cap p(\be_2)$$ and 
$$\#(\C(\D_2)\setminus \C(\D))=2 \#\C_{II}(s)$$ 
\item[(c)]$w_i \cap p(\be_2)=\es$ for every $i \in \C(\D)$ and 
\item[(d)]there exists a collection $(w_i)_{i \in \C(\D_2) \setminus \C(\D)}$ of pairwise disjoint crossing areas in $\R^2 \setminus \left(\Cup_{i \in \C(\D)} w_i \right)$ so that 
$$G \setminus p^{-1}\{z \in w_i \mid i \in \C(\D_2) \setminus \C(\D)\}\subset (\R^2 + (0,0,r_{s})),$$
\end{itemize}
see Figures \ref{werryt} and \ref{krr}.

\begin{figure}[ht]
\includegraphics{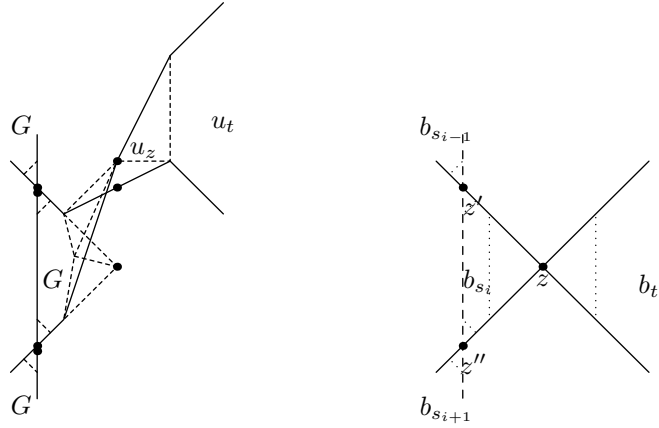}
\caption{For $z \in \C_{II}(s) \cap \C(t),$ $t \in \S(\D),$ and the band $u_z$ connecting $G$ and $u_{t}$ on the left and $\mathrm{Im}(d_2) \subset \R^2$ and $z', z'' \in \C(\D_2) \setminus \C(\D)$ around the crossing $z$ on the right.}\label{krr}
\end{figure} 

Now $M_2$ is isotopic to $M$ and thus satisfies (ii). By property (d) $M_2$ satisfies (iv). 
Towards proving (iii), let $\Omega_0, \Omega_1, \ldots, \Omega_{k} \subset \R^2,$ where $k=2 \#\C_{II}(s),$ be the collection of components of $p(G)\setminus p(\al \cup \be_2)$ and $s_i:=\partial_{\R^2}(\Omega_i)$ for every $i \in \{0, \ldots,k\}.$ Since $p(\be_2) \cap \mathrm{Im}(d)\subset p(\al)$ and $p(\al) \subset \mathrm{Im}(s),$ the collection $\S(\D_2)$ of Seiferts circles of $\D_2$ satisfies 
$$\S(\D_2)= (\S(\D) \setminus \{s\}) \cup \{s_i: i \in \{0, \ldots, k\}\}$$ and $b_{s_i}=\Omega_i.$
Now $s_i \notin \S_{II}(\D_2)$ for every $i \in \{0, \ldots, k\},$ since by property (a) we have $b_{s_i} \cap \mathrm{Im}(d_2) \subset (p(G) \setminus p(\al)) \cap \mathrm{Im}(d_2)=\es.$ Since $s \in \S_{II}(\D)$ and $s \notin \S(\D_2),$ we have $\S_{II}(\D_2)=\S_{II}(\D) \setminus \{s\}.$ Thus $M_2$ satisfies (iii).

It remains to show that $M_2$ satisfies condition (i). 
By property (b) the projection $p \colon \R^3 \ra \R^2$ induces $\D_2$ from $\mathrm{Bd}(M_2)$ and by property (c) the crossing area $w_i$ of $\D$ is a crossing area for $\D_2$ for every $i \in \C(\D).$ Thus $(w_i)_{i \in \C(\D_2)}$ is a pairwise disjoint collection of crossing areas and there exists a collection $(u'_i)_{i \in (\C(\D_2)\setminus \C(\D)) \cup (\S(\D_2)\setminus \S(\D))}$ of closed PL 2-manifolds with one boundary component and genus $0$ in $\mathrm{Cl}\big(G \setminus \big(\Cup_{i \in C_{II}(s)}w_i\big)\big)$
so that when we set $u'_i=u_i \cup (p^{-1}(w_i) \cap G)$ for every $i \in \C_{II}(s)$ and
$u'_i=u_i$ for every $i \in (\S(\D) \cap \S(\D_2)) \cup (\C(\D) \setminus \C_{II}(s))$ and $r_i=r_s$ for every $i \in \S(\D_2)\setminus \S(\D)$ the collection 
$$((u'_i)_{i \in \S(\D_2) \cup \C(\D_2)},(w_i)_{i \in \C(\D_2)},(r_i)_{i \in \S(\D_2)})$$ is a data of $M_2$ with respect to $\D_2.$ Thus $M_2$ is canonical with respect to $\D_2.$ Thus $M_2$ satisfies condition (i). 
\end{proof}

\begin{Cor}\label{4} Let $\D$ be a knot diagram and $M \subset \R^3$ a Seifert surface that is canonical with respect to $\D.$ Then there exists a knot diagram $\D'$ and a Seifert surface $M'$ so that the open Seifert disks in $\D'$ are pairwise disjoint, $M'$ is canonical with respect to $\D'$ and isotopic to $M$ and 
$$\#\C(\D')-\#\C(\D)=\sum_{i \in \S(\D)} 2\#\C_{II}(i).$$
\end{Cor}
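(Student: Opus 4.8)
The plan is to induct on $n=\#\S_{II}(\D),$ with Proposition~\ref{3} furnishing the single inductive step and Lemma~\ref{1} identifying the terminal diagram. If $n=0,$ then Lemma~\ref{1} shows that the open Seifert disks of $\D$ are already pairwise disjoint, so I take $\D'=\D$ and $M'=M.$ In this case $\S_{II}(\D)=\es$ forces $\C_{II}(i)=\es$ for every $i\in\S(\D),$ since a point of $\C_{II}(i)$ would witness a Seifert disk strictly contained in $b_i$ and hence place $i$ in $\S_{II}(\D);$ thus the right-hand side of the asserted identity is $0=\#\C(\D')-\#\C(\D).$

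For the inductive step I assume the statement for all diagrams with fewer than $n$ elements in $\S_{II},$ pick some $s\in\S_{II}(\D),$ and apply Proposition~\ref{3} to produce a knot diagram $\D_1$ and a Seifert surface $M_1$ that is canonical with respect to $\D_1,$ isotopic to $M,$ and satisfies $\S_{II}(\D_1)=\S_{II}(\D)\setminus\{s\}$ together with $\#\C(\D_1)-\#\C(\D)=2\#\C_{II}(s).$ As $\#\S_{II}(\D_1)=n-1,$ the induction hypothesis applied to $(\D_1,M_1)$ yields $\D'$ and $M'$ with pairwise disjoint open Seifert disks, $M'$ canonical with respect to $\D'$ and isotopic to $M_1$ and hence to $M,$ and $\#\C(\D')-\#\C(\D_1)=\sum_{i\in\S(\D_1)}2\#\C_{II}(i).$ The disjointness, canonicity and isotopy assertions are then immediate, and adding the two crossing differences reduces the count to the identity
$$\sum_{i\in\S(\D_1)}2\#\C_{II}(i)=\Big(\sum_{i\in\S(\D)}2\#\C_{II}(i)\Big)-2\#\C_{II}(s),$$
the left sum taken in $\D_1$ and the right in $\D.$

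Verifying this identity is where I expect the real work to lie. Writing $\S(\D_1)=(\S(\D)\setminus\{s\})\cup\{s_0,\dots,s_k\}$ as in Proposition~\ref{3}, each new circuit contributes nothing, because $s_j\notin\S_{II}(\D_1)$ already forces $\C_{II}(s_j)=\es.$ The crux is therefore to show that every surviving circuit keeps exactly its old type-II crossings, that is, $\C_{II}(i)$ computed in $\D_1$ equals $\C_{II}(i)$ computed in $\D$ for each $i\in\S(\D)\setminus\{s\}.$ For this I would extract from the construction in Proposition~\ref{3} that the whole modification is supported in the former disk $b_s$: since $p(\be_2)\cap\mathrm{Im}(d)\subset\mathrm{Im}(s),$ the image of every circuit other than $s$ is untouched, so the nesting relation among these circuits is unchanged and their mutual crossings keep their type. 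A crossing that $i$ formerly shared with $s$ is inherited by a single $s_j,$ and $b_{s_j}\subsetneqq b_i$ holds precisely when $b_s\subsetneqq b_i$ did, so its membership in $\C_{II}(i)$ is unchanged; while the $2\#\C_{II}(s)$ crossings newly created sit between the $s_j$ and circuits strictly inside $b_s,$ are externally tangent there, and therefore enter no $\C_{II}.$ Granting this bookkeeping, the displayed identity holds and the induction closes, the main obstacle being precisely the geometric control on how the slivers $b_{s_j}$ are nested relative to the unchanged circuits.

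Finally, the count is consistent with genus: isotopic canonical surfaces share the genus $(\#\C-\#\S+1)/2,$ so $\#\C(\D')-\#\C(\D)=\#\S(\D')-\#\S(\D),$ and each application of Proposition~\ref{3} replaces the single circuit $s$ by the $2\#\C_{II}(s)+1$ circuits $s_0,\dots,s_k,$ adding precisely $2\#\C_{II}(s)$ to both sides; this gives an independent check on the telescoping sum.
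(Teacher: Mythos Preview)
Your approach is exactly the paper's: induct on $\#\S_{II}(\D)$, with Lemma~\ref{1} handling the base case and Proposition~\ref{3} the step; the paper's own proof is two sentences and omits the crossing-count bookkeeping you supply. One small correction to that bookkeeping: the $2\#\C_{II}(s)$ new crossings lie on $p(\alpha)\cap p(\beta_2)$ and are therefore crossings between \emph{pairs of new circuits} $s_j,s_{j'}$, not between an $s_j$ and an old circuit strictly inside $b_s$; your conclusion that they enter no $\C_{II}$ still stands, since $s_j\notin\S_{II}(\D_1)$ forces $\C_{II}(s_j)=\es$ and these points lie on no $\mathrm{Im}(i)$ for $i\in\S(\D)\setminus\{s\}$.
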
 

\begin{proof}By Lemma \ref{1} the collection of Seifert disks in a knot diagram $\D'$ is pairwise disjoint if $\S_{II}(\D')=\es.$ Thus the statement follows from Proposition \ref{3} by induction on $\#(\S_{II}(\D)).$
\end{proof}

This concludes the proof of Theorem \ref{tulos}. 



\section{Algorithm}

Let $\D=(d,f)$ be a knot diagram of a knot $[k].$ The integer $\G(\D)=(\#\C(\D)-\#\S(\D)+1)/2$ does not depend on the function $f.$ Thus the proof of Proposition \ref{3} yields an algorithm to construct a knot diagram $\D'$ of $[k]$ so that $S_{II}(\D')=\es$ and $\G(\D')=\G(\D).$

Suppose $\S_{II}(\D)\neq \es.$ Let $s \in \S_{II}(\D)$ and $[x_1,x_2] \subset d^{-1}(\mathrm{Im}(s)\setminus \C(\D)).$ By tracing $\mathrm{Im}(s) \setminus s([x_1,x_2])$ from $s(x_2)$ and crossing $\mathrm{Im}(s)$ twice close to every $i \in \C_{II}(s)$ on top of $\mathrm{Im}(s)$ to avoid crossing $\mathrm{Im}(d) \setminus \mathrm{Im}(s)$ we get a knot digram $\D':=(d',f')$ so that
  
\begin{itemize}
\item[(a)] $d \r (S^1 \setminus [x_1,x_2])=d' \r  (S^1 \setminus [x_1,x_2])$ and 
\begin{displaymath}\left\{ \begin{array}{ll}

&f'(x)=f(x) \text{ for } x \in d^{-1}(\C(\D)),\\
&f'(x)=1 \text{ for } x \in d'^{-1}((\C(\D')\setminus \C(\D)) \cap [x_1,x_2]\\  
&f'(x)=0 \text{ for } x \in d'^{-1}((\C(\D')\setminus \C(\D)) \setminus [x_1,x_2] ,
\end{array} \right.
\end{displaymath}
\item[(b)] $\S(\D)\setminus \S(\D')=\{s\}$ and $\#(\S(\D')\setminus \S(\D))=\#(\C(\D')\setminus \C(\D))+1$ and  
\item[(c)] $\S_{II}(\D')\subset \S_{II}(\D).$

\end{itemize}
Now $\D':=(d',f')$ is a knot diagram of $[k]$ satisfying $\#\S_{II}(\D') < \#\S_{II}(\D)$ and $\G(\D')=\G(\D).$ Thus the Seifert circuits $\S_{II}(\D)$ can be removed from $\D$ one by one. 


\bibliography{viite}{}

\bibliographystyle{amsplain}

\end{document}